\newtheorem{theorem}{Theorem}[section]
\newtheorem{definition}[theorem]{Definition}
\newtheorem{proposition}[theorem]{Proposition}
\newtheorem{lemma}[theorem]{Lemma}
\newtheorem{corollary}[theorem]{Corollary}
\begin{document}

\title{A note on free quantum groups}
\author{Teodor Banica}
\address{Department of Mathematics,
Paul Sabatier University, 118 route de Narbonne, 31062 Toulouse, France} 
\email{banica@picard.ups-tlse.fr}
\subjclass[2000]{16W30} 
\keywords{Free quantum group}
\thanks{Work supported by the CNRS and by the Fields Institute}

\begin{abstract}
We study the free complexification operation for compact quantum groups, $G\to G^c$. We prove that, with suitable definitions, this induces a one-to-one correspondence between free orthogonal quantum groups of infinite level, and free unitary quantum groups satisfying $G=G^c$.
\end{abstract}

\maketitle

\section*{Introduction}

In this paper we present some advances on the notion of free quantum group, introduced in \cite{bbc}. We first discuss in detail a result mentioned there, namely that the free complexification operation $G\to G^c$ studied in \cite{b2} produces free unitary quantum groups out of free orthogonal ones. Then we work out the injectivity and surjectivity properties of $G\to G^c$, and this leads to the correspondence announced in the abstract. This correspondence should be regarded as being a first general ingredient for the classification of free quantum groups.

We include in our study a number of general facts regarding the operation $G\to G^c$, by improving some previous work in \cite{b2}. The point is that now we can use general diagrammatic techniques from \cite{bc}, new examples, and the notion of free quantum group \cite{bbc}, none of them available at the time of writing \cite{b2}.

The paper is organized as follows: 1 contains some basic facts about the operation $G\to G^c$, and in 2-5 we discuss the applications to free quantum groups.

\section{Free complexification}

A fundamental result of Voiculescu \cite{vo} states if $(s_1,\ldots,s_n)$ is a semicircular system, and $z$ is a Haar unitary free from it, then $(zs_1,\ldots,zs_n)$ is a circular system. This makes appear the notion of free multiplication by a Haar unitary, $a\to za$, that we call here free complexification. This operation has been intensively studied since then. See Nica and Speicher \cite{ns}.

This operation appears as well in the context of Wang's free quantum groups \cite{wa1}, \cite{wa2}. The main result in \cite{b1} is that the universal free biunitary matrix is the free complexification of the free orthogonal matrix. In other words, the passage $O_n^+\to U_n^+$ is nothing but a free complexification: $U_n^+=O_n^{+c}$. Moreover, some generalizations of this fact are obtained, in an abstract setting, in \cite{b2}.

In this section we discuss the basic properties of $A\to\tilde{A}$, the functional analytic version of $G\to G^c$. We use an adaptation of Woronowicz's axioms in \cite{wo1}.

\begin{definition}
A finitely generated Hopf algebra is a pair $(A,u)$, where $A$ is a $C^*$-algebra and $u\in M_n(A)$ is a unitary whose entries generate $A$, such that
\begin{eqnarray*}
\Delta(u_{ij})&=&\sum u_{ik}\otimes u_{kj}\\
\varepsilon(u_{ij})&=&\delta_{ij}\\
S(u_{ij})&=&u_{ji}^*
\end{eqnarray*}
define morphisms of $C^*$-algebras (called comultiplication, counit and antipode).
\end{definition}

In other words, given $(A,u)$, the morphisms $\Delta,\varepsilon,S$ can exist or not. If they exist, they are uniquely determined, and we say that we have a Hopf algebra.

The basic examples are as follows:
\begin{enumerate}
\item The algebra of functions $A=C(G)$, with the matrix $u=(u_{ij})$ given by $g=(u_{ij}(g))$, where $G\subset U_n$ is a compact group.

\item The group algebra $A=C^*(\Gamma)$, with the matrix $u={\rm diag}(g_1,\ldots,g_n)$, where $\Gamma=<g_1,\ldots,g_n>$ is a finitely generated group.
\end{enumerate}

Let $\mathbb T$ be the unit circle, and let $z:\mathbb T\to\mathbb C$ be the identity function, $z(x)=x$. Observe that $(C(\mathbb T),z)$ is a finitely generated Hopf algebra, corresponding to the compact group $\mathbb T\subset U_1$, or, via the Fourier transform, to the group $\mathbb Z=<1>$.

\begin{definition}
Associated to $(A,u)$ is the pair $(\tilde{A},\tilde{u})$, where $\tilde{A}\subset C(\mathbb T)*A$ is the $C^*$-algebra generated by the entries of the matrix $\tilde{u}=zu$.
\end{definition}

It follows from the general results of Wang in \cite{wa1} that $(\tilde{A},\tilde{u})$ is indeed a finitely generated Hopf algebra. Moreover, $\tilde{u}$ is the free complexification of $u$ in the free probabilistic sense, i.e. with respect to the Haar functional. See \cite{b2}.

A morphism between two finitely generated Hopf algebras $f:(A,u)\to (B,v)$ is by definition a morphism of $*$-algebras $A_s\to B_s$ mapping $u_{ij}\to v_{ij}$, where $A_s\subset A$ and $B_s\subset B$ are the dense $*$-subalgebras generated by the elements $u_{ij}$, respectively $v_{ij}$. Observe that in order for a such a morphism to exist, $u,v$ must have the same size, and that if such a morphism exists, it is unique. See \cite{b2}.

\begin{proposition}
The operation $A\to\tilde{A}$ has the following properties:
\begin{enumerate}
\item We have a morphism $(\tilde{A},\tilde{u})\to (A,u)$.
\item A morphism $(A,u)\to(B,v)$ produces a morphism $(\tilde{A},\tilde{u})\to(\tilde{B},\tilde{v})$.
\item We have an isomorphism $(\tilde{\tilde{A}},\tilde{\tilde{u}})=(\tilde{A},\tilde{u})$.
\end{enumerate}
\end{proposition}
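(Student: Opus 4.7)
For (1), I would use the evaluation $\varepsilon: C(\mathbb{T}) \to \mathbb{C}$, $z \mapsto 1$, which together with $\mathrm{id}_A$ gives, by the universal property of the free product, a unital $*$-morphism $\pi: C(\mathbb{T}) * A \to A$ sending $z \mapsto 1$ and $u_{ij} \mapsto u_{ij}$. Then $\pi(\tilde{u}_{ij}) = \pi(z u_{ij}) = u_{ij}$, so restricting $\pi$ to the dense $*$-subalgebra generated by the entries of $\tilde{u}$ produces the required morphism $(\tilde{A}, \tilde{u}) \to (A, u)$.

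For (2), given $f: A_s \to B_s$ with $f(u_{ij}) = v_{ij}$, functoriality of the (algebraic) free product yields a $*$-morphism $\mathrm{id} * f: (C(\mathbb{T}) * A)_s \to (C(\mathbb{T}) * B)_s$ sending $z \mapsto z$ and $u_{ij} \mapsto v_{ij}$, hence $\tilde{u}_{ij} = zu_{ij} \mapsto zv_{ij} = \tilde{v}_{ij}$. Restriction to the $*$-subalgebra generated by the entries of $\tilde{u}$ gives the desired morphism $(\tilde{A}, \tilde{u}) \to (\tilde{B}, \tilde{v})$.

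For (3), I would write $z, z'$ for the two copies of the Haar unitary, so that $\tilde{u}_{ij} = z u_{ij}$ sits in $C(\mathbb{T}) * A$, while $\tilde{\tilde{u}}_{ij} = z' \tilde{u}_{ij} = z'z u_{ij}$ sits in the iterated free product $C(\mathbb{T}) * C(\mathbb{T}) * A$. The morphism $(\tilde{\tilde{A}}, \tilde{\tilde{u}}) \to (\tilde{A}, \tilde{u})$ is just (1) applied with $\tilde{A}$ in place of $A$. For the reverse, I would observe that $z'z$ is a unitary in $C(\mathbb{T}) * C(\mathbb{T})$, so the universal property of $C(\mathbb{T})$ (as the universal $C^*$-algebra on one unitary) yields a $*$-morphism $C(\mathbb{T}) \to C(\mathbb{T}) * C(\mathbb{T})$ sending $z \mapsto z'z$; combined with $\mathrm{id}_A$, this produces $\phi: C(\mathbb{T}) * A \to C(\mathbb{T}) * C(\mathbb{T}) * A$ sending $\tilde{u}_{ij} \mapsto \tilde{\tilde{u}}_{ij}$, whose restriction is the desired morphism $(\tilde{A}, \tilde{u}) \to (\tilde{\tilde{A}}, \tilde{\tilde{u}})$. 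Both compositions fix the generating matrices, so by the uniqueness clause in the definition of morphism they are identities, yielding the isomorphism. The only step requiring genuine insight is this reverse direction: one must notice that $z'z$ is itself a unitary and can therefore serve as the image of the single generator of $C(\mathbb{T})$ via its universal property, bypassing any appeal to probabilistic freeness or to the Haar state.
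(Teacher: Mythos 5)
Your proof is correct, and it is worth noting that the paper itself gives no argument at all for this proposition --- its proof is the single sentence that everything is ``clear from definitions,'' with details deferred to \cite{b2}. What you have written is precisely the routine verification being deferred: (1) via the trivial character $z\mapsto 1$ of $C(\mathbb T)$ and the universal property of the full free product; (2) via functoriality; (3) via the map $z\mapsto z'z$ coming from the universal property of $C(\mathbb T)$ as the universal $C^*$-algebra generated by a single unitary, together with the uniqueness-of-morphisms observation to conclude that both composites are identities. Your closing remark correctly isolates the only non-mechanical step: part (3) is purely Tannakian/universal and needs no appeal to the Haar state or to freeness.

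One point in (2) deserves an explicit line of justification. Since a morphism $(A,u)\to(B,v)$ is by definition only a $*$-homomorphism $A_s\to B_s$ of the dense generated subalgebras, and need not be bounded or extend to the $C^*$-level, you are right to invoke the \emph{algebraic} free product; but then $\mathrm{id}*f$ is a priori defined on the abstract algebraic free product of $\mathbb C[z,z^{-1}]$ and $A_s$, whereas the algebra you need it on is the $*$-subalgebra of the $C^*$-algebra $C(\mathbb T)*A$ generated by $z$ and the $u_{ij}$. Identifying these two --- i.e.\ knowing that the canonical map from the algebraic free product into the full $C^*$-free product is injective, so that no new relations appear --- is what makes ``restriction'' legitimate. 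This is a standard fact (the map factors through a reduced free product taken with respect to faithful states, which exist here since $A$ is finitely generated, hence separable, and the algebraic free product embeds in such a reduced free product), but it is the one place in your argument where something beyond pure universal-property formalism is used, and it should be stated rather than left implicit.
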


\begin{proof}
All the assertions are clear from definitions, see \cite{b2} for details.
\end{proof}

\begin{theorem}
If $\Gamma=<g_1,\ldots,g_n>$ is a finitely generated group then $\tilde{C}^*(\Gamma)\simeq C^*(\mathbb Z*\Lambda)$, where $\Lambda=<g_i^{-1}g_j\ |\ i,j=1,\ldots,n>$.
\end{theorem}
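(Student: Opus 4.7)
My plan is to work out both sides explicitly as subalgebras of $C^*(\mathbb{Z}*\Gamma)$, and then identify them via a combinatorial computation at the group level. First, since the full unital $C^*$-algebraic free product corresponds to the free product of groups, we have $C(\mathbb{T})*C^*(\Gamma) = C^*(\mathbb{Z}*\Gamma)$, where $z$ becomes the generator of the $\mathbb{Z}$ factor. In this identification, $\tilde{u} = zu = \mathrm{diag}(zg_1,\ldots,zg_n)$, so $\tilde{A}$ is the $C^*$-subalgebra of $C^*(\mathbb{Z}*\Gamma)$ generated by the group elements $h_i := zg_i$ for $i=1,\ldots,n$. A standard fact for discrete groups (using induction of unitary representations to compare norms) is that for a subgroup $H \leq G$ the natural map $C^*(H) \to C^*(G)$ is isometric, so the $C^*$-subalgebra generated by a set of group elements coincides with the full group $C^*$-algebra of the subgroup they generate. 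Hence $\tilde{A} = C^*(K)$, where $K = \langle h_1,\ldots,h_n\rangle \leq \mathbb{Z}*\Gamma$.

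The heart of the proof is therefore to show $K \simeq \mathbb{Z}*\Lambda$ as abstract groups. The identity $h_i = h_1\cdot(g_1^{-1}g_i)$, together with $h_i^{-1}h_j = g_i^{-1}g_j \in \Lambda$, shows that $K$ is generated by $h_1 = zg_1$ and $\Lambda$, i.e. $K = \langle zg_1,\Lambda\rangle$. To identify this with a free product, I would introduce the automorphism $\phi$ of $\mathbb{Z}*\Gamma$ defined by $\phi(z) = zg_1$, $\phi(g_i) = g_i$; this is an automorphism because the formula $z \mapsto zg_1^{-1}$, $g_i \mapsto g_i$ defines a two-sided inverse by universality of $\mathbb{Z}*\Gamma$. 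Since $\phi$ fixes $\Lambda$ pointwise, it maps the naturally embedded subgroup $\langle z,\Lambda\rangle$ onto $\langle zg_1,\Lambda\rangle = K$. Finally, the normal form theorem for free products applied inside $\mathbb{Z}*\Gamma$ shows that a reduced word in $\langle z\rangle * \Lambda$ (alternating between nontrivial powers of $z$ and nontrivial elements of $\Lambda \subset \Gamma$) is already a reduced alternating word in the free product $\mathbb{Z}*\Gamma$, hence nontrivial there; this gives $\langle z,\Lambda\rangle \cong \mathbb{Z}*\Lambda$, and therefore $K \simeq \mathbb{Z}*\Lambda$.

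Putting the steps together yields $\tilde{C}^*(\Gamma) = C^*(K) \simeq C^*(\mathbb{Z}*\Lambda)$, as desired. The main subtle point is the $C^*$-algebraic identification of a subalgebra generated by group-like elements with the full $C^*$-algebra of the subgroup they generate, which requires the non-obvious isometric inclusion $C^*(H) \hookrightarrow C^*(G)$ for full group $C^*$-algebras; once this is in place, the group-theoretic core reduces, via the automorphism $\phi$, to a direct application of free product normal forms.
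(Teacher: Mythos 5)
Your proof is correct and follows the same route as the paper: identify $C(\mathbb{T})*C^*(\Gamma)$ with $C^*(\mathbb{Z}*\Gamma)$ via Fourier transform, reduce to the subgroup $\tilde{\Gamma}=\langle zg_1,\ldots,zg_n\rangle\subset\mathbb{Z}*\Gamma$, and show by examining generators that $\tilde{\Gamma}\simeq\mathbb{Z}*\Lambda$. The paper only sketches this (deferring to its reference [b2]), whereas you supply the details it omits --- the isometric embedding $C^*(H)\hookrightarrow C^*(G)$, the automorphism $z\mapsto zg_1$, and the normal form argument --- all of which are sound.
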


\begin{proof}
By using the Fourier transform isomorphism $C(\mathbb T)\simeq C^*(\mathbb Z)$ we obtain $\tilde{C}^*(\Gamma)=C^*(\tilde{\Gamma})$, with $\tilde{\Gamma}\subset\mathbb Z*\Gamma$. Then, a careful examination of generators gives the isomorphism $\tilde{\Gamma}\simeq\mathbb Z*\Lambda$. See \cite{b2} for details.
\end{proof}

At the dual level, we have the following question: what is the compact quantum group $G^c$ defined by $C(G^c)=\tilde{C}(G)$? There is no simple answer to this question, unless in the abelian case, where we have the following result.

\begin{theorem}
If $G\subset U_n$ is a compact abelian group then $\tilde{C}(G)=C^*(\mathbb Z*\widehat{L})$, where $L$ is the image of $G$ in the projective unitary group $PU_n$.
\end{theorem}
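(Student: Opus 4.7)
The plan is to reduce to Theorem 1.4 via Pontryagin duality, after first putting the embedding $G\subset U_n$ into diagonal form.

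First I would note that both sides of the identity are invariants of $G\subset U_n$ under conjugation by elements of $U_n$: if we replace $u$ by $wuw^*$ for some $w\in U_n$, then $\tilde{u}=zu$ becomes $w\tilde{u}w^*$, whose entries generate the same $C^*$-subalgebra of $C(\mathbb T)*C(G)$; on the other hand $L$, being the image of $G$ in $PU_n$, is visibly conjugation-invariant. Since $G$ is compact abelian, simultaneous diagonalization lets us conjugate into the diagonal torus $\mathbb T^n\subset U_n$, so we may assume $G\subset\mathbb T^n$.

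Under this assumption, $u=\mathrm{diag}(g_1,\ldots,g_n)$ where the $g_i\in\widehat{G}$ are the coordinate characters, and these generate $\widehat{G}$ because $G\subset\mathbb T^n$ is a closed subgroup (so $\widehat{G}=\mathbb Z^n/H$ is generated by the images of the standard basis). Using Fourier duality $C(G)\simeq C^*(\widehat{G})$, Theorem 1.4 gives directly
\[
\tilde{C}(G)\;\simeq\;\tilde{C}^*(\widehat{G})\;\simeq\;C^*(\mathbb Z*\Lambda),\qquad \Lambda=\langle g_i^{-1}g_j\mid i,j=1,\ldots,n\rangle\subset\widehat{G}.
\]

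The remaining task is the identification $\Lambda=\widehat{L}$. Here $L=G/(G\cap Z(U_n))$ with $Z(U_n)=\mathbb T\cdot I_n$, so an element $g\in G\subset\mathbb T^n$ lies in $G\cap Z(U_n)$ precisely when $g_i(g)=g_j(g)$ for all $i,j$, i.e.\ when $g\in\bigcap_{i,j}\ker(g_i^{-1}g_j)$. By Pontryagin duality applied to the short exact sequence $1\to G\cap Z(U_n)\to G\to L\to 1$, the subgroup $\widehat{L}\subset\widehat{G}$ consists of exactly those characters vanishing on $G\cap Z(U_n)$, which by the above is the subgroup generated by the elements $g_i^{-1}g_j$, namely $\Lambda$.

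I do not anticipate a serious obstacle here: Step 1 is a soft conjugation-invariance argument, Step 2 is a direct invocation of Theorem 1.4, and Step 3 is routine Pontryagin duality. The one point requiring a little care is checking that the diagonalization step really does not affect $\tilde C(G)$ as an abstract $C^*$-algebra (rather than merely as a subalgebra of $C(\mathbb T)*C(G)$), but this follows because the conjugation by $w$ extends to an automorphism of $C(\mathbb T)*C(G)$ carrying the two candidate subalgebras into each other.
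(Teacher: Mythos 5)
Your proposal is correct and follows essentially the same route as the paper: the paper's phrase ``the embedding, viewed as a representation, must come from a generating system $\widehat{G}=\langle g_1,\ldots,g_n\rangle$'' is precisely your diagonalization step, after which both proofs invoke Theorem 1.4 via the Fourier transform and identify $\Lambda=\widehat{L}$ by Pontryagin duality. Your write-up merely fills in the details the paper leaves as ``routine'' (the conjugation-invariance of both sides, and the double-annihilator argument showing that the subgroup generated by the $g_i^{-1}g_j$ equals the annihilator of $G\cap Z(U_n)$).
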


\begin{proof}
The embedding $G\subset U_n$, viewed as a representation, must come from a generating system $\widehat{G}=<g_1,\ldots,g_n>$. It routine to check that the subgroup $\Lambda\subset\widehat{G}$ constructed in Theorem 1.4 is the dual of $L$, and this gives the result.
\end{proof}

\section{Free quantum groups}

Consider the groups $S_n\subset O_n\subset U_n$, with the elements of $S_n$ viewed as permutation matrices. Consider also the following subgroups of $U_n$:
\begin{enumerate}
\item $S_n'=\mathbb Z_2\times S_n$, the permutation matrices multiplied by $\pm 1$.
\item $H_n=\mathbb Z_2\wr S_n$, the permutation matrices with $\pm$ coefficients. 
\item $P_n=\mathbb T\times S_n$, the permutation matrices multiplied by scalars in $\mathbb T$.
\item $K_n=\mathbb T\wr S_n$, the permutation matrices with coefficients in $\mathbb T$. 
\end{enumerate}

Observe that $H_n$ is the hyperoctahedral group. It is convenient to collect the above definitions into a single one, in the following way.

\begin{definition}
We use the diagram of compact groups
$$\begin{matrix}
U_n&\supset&K_n&\supset&P_n\cr\cr
\cup&&\cup&&\cup\cr\cr
O_n&\supset&H_n&\supset&S_n^*
\end{matrix}$$
where $S^*$ denotes at the same time $S$ and $S'$.
\end{definition}

In what follows we describe the free analogues of these 7 groups. For this purpose, we recall that a square matrix $u\in M_n(A)$ is called:
\begin{enumerate}
\item Orthogonal, if $u=\bar{u}$ and $u^t=u^{-1}$. 
\item Cubic, if it is orthogonal, and $ab=0$ on rows and columns.
\item Magic', if it is cubic, and the sum on rows and columns is the same.
\item Magic, if it is cubic, formed of projections ($a^2=a=a^*$).
\item Biunitary, if both $u$ and $u^t$ are unitaries. 
\item Cubik, if it is biunitary, and $ab^*=a^*b=0$ on rows and columns.
\item Magik, if it is cubik, and the sum on rows and columns is the same.
\end{enumerate}

Here the equalities of type $ab=0$ refer to distinct entries on the same row, or on the same column. The notions (1, 2, 4, 5) are from \cite{wa1,bbc,wa2,wa1}, and (3, 6, 7) are new. The terminology is of course temporary: we have only 7 examples of free quantum groups, so we don't know exactly what the names name.

\begin{theorem}
$C(G_n)$ with $G=OHS^*UKP$ is the universal commutative $C^*$-algebra generated by the entries of a $n\times n$ orthogonal, cubic, magic*, biunitary, cubik, magik matrix.
\end{theorem}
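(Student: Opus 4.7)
The plan is to appeal to Gelfand duality and reduce the assertion, case by case, to an elementary matrix-theoretic identification. The first step is to observe that the universal commutative unital $C^*$-algebra generated by the entries of an $n\times n$ matrix $u=(u_{ij})$ satisfying a fixed set $R$ of $*$-polynomial relations is, by Gelfand--Naimark, $C(X_R)$, where
\[
X_R=\{g\in M_n(\mathbb C)\mid (g_{ij})\text{ satisfies the relations }R\}\subset M_n(\mathbb C),
\]
since characters of such an algebra correspond to assignments $u_{ij}\mapsto g_{ij}\in\mathbb C$ compatible with $R$, and universality in the commutative category is automatic. The task thus becomes, for each of the seven matrix types in the statement, to identify $X_R$ as the corresponding group $G_n\subset U_n$ in the diagram of Definition~2.1.

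The verifications are then short. The orthogonal and biunitary cases yield $O_n$ and $U_n$ directly from the definitions. For cubic (respectively cubik), the relations $u_{ij}u_{ik}=0$ (resp.\ $u_{ij}u_{ik}^*=u_{ij}^*u_{ik}=0$) for $j\neq k$ force at most one non-zero entry in each row, and the analogous column relations do the same for columns; combined with orthogonality (resp.\ unitarity), which forces every row and column to have unit norm, this gives exactly one non-zero entry in each row and each column, of value $\pm1$ (resp.\ of modulus~$1$). Hence $X_R=H_n$ and $X_R=K_n$ respectively.

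For magic, self-adjoint idempotent scalars lie in $\{0,1\}$, and the cubic structure with row/column unit-norm condition then forces precisely one $1$ per row and column, so $X_R=S_n$. For magic$'$ and magik, the cubic/cubik analysis already gives a monomial matrix with entries in $\{\pm1\}$ or $\mathbb T$; the extra constraint that all row and column sums coincide then forces the unique non-zero entry of every row and column to be the same scalar $\lambda$. The matrix is therefore $\lambda P$ with $\lambda\in\{\pm1\}$ (resp.\ $\lambda\in\mathbb T$) and $P$ a permutation matrix, yielding $X_R=S_n'$ and $X_R=P_n$.

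The main obstacle, such as it is, is purely bookkeeping: one has to combine the row and column versions of the annihilation relations to get the monomial structure, and then propagate the constant-sum condition across the whole matrix in the magic$'$/magik case. None of these steps is deeper than a direct computation with a single $n\times n$ complex matrix, so the proof is essentially an unpacking of the definitions once the Gelfand reduction has been made.
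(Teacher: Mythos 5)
Your proof is correct, but it takes a genuinely different route from the paper. The paper's proof is essentially a citation: the cases $G=O,H,S,U$ are taken as known from \cite{wa1}, \cite{bbc}, \cite{wa2}, and the new cases $G=S',K,P$ are deduced from these ``by identifying the corresponding subgroups,'' i.e.\ by matching the extra relations (magic$'$, cubik, magik) with the subgroups $S_n'\subset H_n$, $K_n\subset U_n$, $P_n\subset K_n$. You instead give a uniform, self-contained argument: Gelfand duality reduces the statement to identifying the compact space $X_R\subset M_n(\mathbb C)$ of scalar solutions of each set of relations, and then seven elementary matrix computations identify $X_R$ with the corresponding group. Your verifications are sound --- in particular the two key points, namely that the annihilation relations together with unit row and column norms force a monomial matrix, and that the equal-sums condition propagates a single scalar $\lambda$ across all nonzero entries, giving $\lambda P$ with $P$ a permutation matrix in the magic$'$/magik cases. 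The Gelfand step itself is legitimate here because every one of the seven relation sets contains unitarity, so the generators are norm-bounded, the universal commutative $C^*$-algebra exists, its characters are exactly the points of $X_R$, and the coordinate functions generate $C(X_R)$ by Stone--Weierstrass. What your approach buys is independence from the literature and a uniform treatment of all seven cases; what the paper's approach buys is brevity, plus the structural observation that the three new groups arise from the four known ones by adding relations --- the same pattern that reappears in the quantum (liberated) setting of Definition 2.3 and Theorem 2.4.
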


\begin{proof}
The case $G=OHSU$ is discussed in \cite{wa1,bbc,wa2,wa1}, and the case $G=S'KP$ follows from it, by identifying the corresponding subgroups.
\end{proof}

We proceed with liberation: definitions will become theorems and vice versa.

\begin{definition}
$A_g(n)$ with $g=ohs^*ukp$ is the universal $C^*$-algebra generated by the entries of a $n\times n$ orthogonal, cubic, magic*, biunitary, cubik, magik matrix.
\end{definition}

The $g=ohsu$ algebras are from \cite{wa1,bbc,wa2,wa1}, and the $g=s'kp$ ones are new.

\begin{theorem}
We have the diagram of Hopf algebras
$$\begin{matrix}
A_u(n)&\to&A_k(n)&\to&A_p(n)\cr\cr
\downarrow&&\downarrow&&\downarrow\cr\cr
A_o(n)&\to&A_h(n)&\to&A_{s^*}(n)
\end{matrix}$$
where $s^*$ denotes at the same time $s$ and $s'$.
\end{theorem}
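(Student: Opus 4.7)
The plan is to prove the statement in two parts: first, that each of the seven universal algebras $A_g(n)$ carries a Hopf algebra structure; second, that the six arrows exist and are Hopf algebra morphisms.

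For the Hopf algebra structure on $A_g(n)$, I invoke the universal property defining this algebra. To produce the comultiplication, counit, and antipode, it suffices to check that each of the seven defining properties $P$ is preserved under three constructions: forming the matrix $U_{ij}=\sum_k u_{ik}\otimes u_{kj}$ over $A_g(n)\otimes A_g(n)$, forming the identity matrix $I_n$, and forming the matrix $V_{ij}=u_{ji}^*$ over $A_g(n)^{\mathrm{op}}$. For $g=o,h,s,u$ these stability statements are contained in the cited works, so what remains is the verification for the three new properties $s'$, $k$, $p$.

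The essential new computation is for cubik: assuming $u$ is cubik, one has, for $j\neq k$,
$$U_{ij}U_{ik}^*=\sum_{l,m}u_{il}u_{im}^*\otimes u_{lj}u_{mk}^*=\sum_l u_{il}u_{il}^*\otimes u_{lj}u_{lk}^*=0,$$
where row vanishing on the first tensor factor collapses the double sum to its diagonal and row vanishing on the second tensor factor kills the result. The three remaining variants of the cubik relation (the identity $U_{ij}^*U_{ik}$ on rows, and the two analogous column relations) are handled identically, as is biunitarity. For magik, the equal-sum condition propagates because, denoting the common sum of $u$ by $c$,
$$\sum_j U_{ij}=\sum_k u_{ik}\otimes\sum_j u_{kj}=\sum_k u_{ik}\otimes c=c\otimes c,$$
which is independent of $i$, and the column sum is symmetric. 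For magic$'$ the same argument applies with the cubic computation in place of the cubik one. Preservation under $I_n$ is trivial, and preservation under $V_{ij}=u_{ji}^*$ follows from the observation that swapping rows with columns and taking adjoints of the defining relations produces relations of the same form.

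For the six arrows, each is obtained by applying the universal property of the source algebra to the generating matrix of the target algebra, using the following implications between defining properties: orthogonal implies biunitary, since $u=\bar u$ yields $u^*=u^t=u^{-1}$; cubic implies cubik, since self-adjointness of entries converts $ab=0$ into $ab^*=a^*b=0$; magic$^*$ implies magik, by combining the previous implications with the equal-sum condition; and horizontally, the passages from biunitary to cubik to magik, and from orthogonal to cubic to magic$^*$, amount to adding the cubic/cubik relations and the equal-sum condition. Since each morphism sends generators to generators, and $\Delta,\varepsilon,S$ are defined by identical formulas on generators in source and target, the arrows automatically commute with the Hopf algebra structure. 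The main obstacle is the finicky bookkeeping in the cubik and magik computations, which is a slight adaptation of the orthogonal/magic case but requires care with the placement of adjoints in the four variants of the cubik relation.
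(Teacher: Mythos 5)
Your proposal is correct and takes essentially the same approach as the paper: the paper's proof likewise constructs $\Delta,\varepsilon,S$ and the connecting arrows via the universal properties of the algebras $A_g(n)$, citing the known cases $g=o,h,s,u$ and declaring the new cases $g=s',k,p$ "similar". Your explicit cubik/magik/magic$'$ verifications are precisely the details the paper leaves out.
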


\begin{proof}
The morphisms in Definition 1.1 can be constructed by using the universal property of each of the algebras involved. For the algebras $A_{ohsu}$ this is known from \cite{wa1,bbc,wa2,wa1}, and for the algebras $A_{s'kp}$ the proof is similar.
\end{proof}

\section{Diagrams}

Let $F=<a,b>$ be the monoid of words on two letters $a,b$. For a given corepresentation $u$ we let $u^a=u$, $u^b=\bar{u}$, then we define the tensor powers $u^\alpha$ with $\alpha\in F$ arbitrary, according to the rule $u^{\alpha\beta}=u^\alpha\otimes u^\beta$.

\begin{definition}
Let $(A,u)$ be a finitely generated Hopf algebra.
\begin{enumerate}
\item $CA$ is the collection of linear spaces $\{Hom(u^\alpha,u^\beta)|\alpha,\beta\in F\}$.
\item In the case $u=\bar{u}$ we identify $CA$ with $\{Hom(u^k,u^l)|k,l\in\mathbb N\}$.
\end{enumerate}
\end{definition}

A morphism $(A,u)\to (B,v)$ produces inclusions $Hom(u^\alpha,u^\beta)\subset Hom(v^\alpha,v^\beta)$ for any $\alpha,\beta\in F$, so we have the following diagram:
$$\begin{matrix}
CA_u(n)&\subset&CA_k(n)&\subset&CA_p(n)\cr\cr
\cap&&\cap&&\cap\cr\cr
CA_o(n)&\subset&CA_h(n)&\subset&CA_{s^*}(n)
\end{matrix}$$

We recall that $CA_s(n)$ is the category of Temperley-Lieb diagrams. That is, $Hom(u^k,u^l)$ is isomorphic to the abstract vector space spanned by the diagrams between an upper row of $2k$ points, and a lower row of $2l$ points. See \cite{bbc}.

In order to distinguish between various meanings of the same diagram, we attach words to it. For instance $\Cap_{ab},\Cap_{ba}$ are respectively in $D_s(\emptyset,ab),D_s(\emptyset,ba)$.

\begin{lemma}
The categories for $A_g(n)$ with $g=ohsukp$ are as follows:
\begin{enumerate}
\item $CA_o(n)=<\Cap>$.
\item $CA_h(n)=<\Cap,|{\:}^\cup_\cap\,|>$.
\item $CA_{s'}(n)=<\Cap,|{\:}^\cup_\cap\,|,{{\;}^\cup_\cap}>$.
\item $CA_s(n)=<\Cap,|{\:}^\cup_\cap\,|,\cap>$.
\item $CA_u(n)=<\Cap_{ab},\Cap_{ba}>$.
\item $CA_k(n)=<\Cap_{ab},\Cap_{ba},|{\:}^\cup_\cap\,|^{ab}_{ab},|{\:}^\cup_\cap\,|^{ba}_{ba}>$.
\item $CA_p(n)=<\Cap_{ab},\Cap_{ba},|{\:}^\cup_\cap\,|^{ab}_{ab},|{\:}^\cup_\cap\,|^{ba}_{ba},{{\;}^\cup_\cap}^a_a,{{\;}^\cup_\cap}^b_b>$.
\end{enumerate}
\end{lemma}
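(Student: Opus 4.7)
The strategy is Woronowicz's Tannakian duality for finitely generated Hopf algebras: the pair $(A,u)$ is determined by its tensor category $CA$ of intertwiners, and conversely, any concrete monoidal $*$-category containing a distinguished object of dimension $n$ reconstructs such a pair. In this framework the lemma reduces, for each $g\in\{o,h,s,s',u,k,p\}$, to identifying the universal Hopf algebra whose intertwiner category contains the listed diagrams $D_g$ with the Hopf algebra $A_g(n)$ of Definition 2.3. Since both are defined by universal properties, it is enough to match relations on the generators.

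The first step is to set up the diagram--relation dictionary. Identifying $\Cap\in\mathrm{Hom}(1,u\otimes u)$ with the vector $\sum_i e_i\otimes e_i$, the intertwiner property is equivalent to $u=\bar u$ together with $uu^t=1$, i.e.\ orthogonality. The four-box $|{\:}^\cup_\cap\,|$, read as the diagonal projection $\sum_i e_{ii}\otimes e_{ii}$, is an endomorphism of $u\otimes u$ exactly when the cubic relations $u_{ij}u_{ik}=u_{ji}u_{ki}=0$ (for $j\ne k$) are satisfied. The dot ${{\;}^\cup_\cap}$, identified with the sum vector $\sum_i e_i$, forces row and column sums of $u$ to coincide (the magic$'$ condition); adjoining $\cap\in\mathrm{Hom}(u,1)$ further normalises those sums to $1$, yielding magic. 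The labels from $F=\langle a,b\rangle$ track which tensor slots carry $u$ and which carry $\bar u$, so the decorated caps $\Cap_{ab},\Cap_{ba}$ impose biunitarity without forcing $u=\bar u$, and the decorated four-box $|{\:}^\cup_\cap\,|^{ab}_{ab}$ and dot ${{\;}^\cup_\cap}^a_a$ produce the cubik and magik conditions.

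The proof then closes by comparing the two universal properties case by case. The cases $g\in\{o,h,s,u\}$ are already in \cite{bbc,wa1,wa2}; the new cases $s',k,p$ run along the same lines using the diagrammatic framework of \cite{bc}. The principal obstacle I anticipate lies in the decorated unitary cases: the monoid $F$ generates a profusion of Hom spaces indexed by pairs of words, and one must check that the short list of generators in items (5)--(7) really suffices, i.e.\ that the closure of these generators under tensor product, composition and adjoints recovers every $\mathrm{Hom}(u^\alpha,u^\beta)$ that can arise. Concretely, this amounts to verifying that once the biunitary condition is encoded by $\Cap_{ab},\Cap_{ba}$, each decorated Temperley--Lieb diagram in $CA_{k}(n)$ or $CA_{p}(n)$ can be assembled from the named generators after suitable insertions of caps and cups; this is precisely the combinatorial content of \cite{bc}, which is why that reference is invoked.
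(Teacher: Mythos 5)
Your proposal is correct and follows essentially the same route as the paper: the known cases $g=o,h,s,u$ are delegated to \cite{bbc} and \cite{bc}, and the new cases $g=s',k,p$ are handled by translating the extra diagrams into the cubik/magik conditions on the generators and invoking the universal property via Tannakian duality. One precision, though: the undecorated dot ${\;}^\cup_\cap$ must be read as the all-ones operator $\sum_{ij}e_{ij}\in End(u)$, as in the paper, and not as the sum vector $\sum_i e_i\in Hom(1,u)$; the latter identification would force all row and column sums to equal $1$, i.e.\ the magic condition, collapsing $s'$ to $s$, whereas the operator reading yields exactly your stated conclusion that the sums merely coincide.
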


\begin{proof}
The case $g=ohs$ is discussed in \cite{bbc}, and the case $g=u$ is discussed in \cite{bc}. In the case $g=s'kp$ we can use the following formulae:
\begin{eqnarray*}
{\;}^\cup_\cap
&=&\sum_{ij}e_{ij}\\
|{\:}^\cup_\cap\,|
&=&\sum_ie_{ii}\otimes e_{ii}
\end{eqnarray*}

The commutation conditions $|{\:}^\cup_\cap\,|\in End(u\otimes\bar{u})$ and $|{\:}^\cup_\cap\,|\in End(\bar{u}\otimes u)$ correspond to the cubik condition, and the extra relations ${\;}^\cup_\cap\in End(u)$ and ${\;}^\cup_\cap\in End(\bar{u})$ correspond to the magik condition. Together with the fact that orthogonal plus magik means magic', this gives all the $g=s'kp$ assertions.
\end{proof}

We can color the diagrams in several ways: either by putting the sequence $xyyxxyyx\ldots$ on both rows of points, or by putting $\alpha,\beta$ on both rows, then by replacing $a\to xy,b\to yx$. We say that the diagram is colored if all the strings match, and half-colored, if there is an even number of unmatches.

\begin{theorem}
For $g=ohs^*ukp$ we have $CA_g(n)={\rm span}(D_g)$, where:
\begin{enumerate}
\item $D_s(k,l)$ is the set of all diagrams between $2k$ points and $2l$ points.

\item $D_{s'}(k,l)=D_s(k,l)$ for $k-l$ even, and $D_{s'}(k,l)=\emptyset$ for $k-l$ odd.

\item $D_h(k,l)$ consists of diagrams which are colorable $xyyxxyyx\ldots$.

\item $D_o(k,l)$ is the image of $D_s(k/2,l/2)$ by the doubling map.

\item $D_p(\alpha,\beta)$ consists of diagrams half-colorable $a\to xy, b\to yx$.

\item $D_k(\alpha,\beta)$ consists of diagrams colorable $a\to xy, b\to yx$.

\item $D_u(\alpha,\beta)$ consists of double diagrams, colorable $a\to xy,b\to yx$.
\end{enumerate}
\end{theorem}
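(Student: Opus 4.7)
The plan is to leverage Lemma 3.2 and reduce the seven equalities $CA_g(n)=\mathrm{span}(D_g)$ to purely combinatorial statements about the closure of the diagram sets $D_g$ under composition, tensor product, and adjoint. The cases $g=o,h,s,u$ are already proved in \cite{bbc,bc}, so I concentrate on the three new cases $g=s',k,p$, treating both inclusions separately.

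For the inclusion $\mathrm{span}(D_g)\supset CA_g(n)$, I verify that each of the generators listed in Lemma 3.2 actually belongs to $D_g$ and that $D_g$ is closed under the three categorical operations. For $s'$, every generator has an even difference between its numbers of upper and lower points, and this parity is preserved by $\otimes,\circ,{}^*$. For $k$ and $p$, the generators are visibly (half-)colorable with the prescribed rule $a\mapsto xy,\ b\mapsto yx$, and both colorability and half-colorability are preserved by vertical concatenation (composition), horizontal concatenation (tensor), and reflection (adjoint), because along any internal string the color labels simply propagate. This closure step gives the easy direction.

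For the inclusion $\mathrm{span}(D_g)\subset CA_g(n)$, I argue that any $d\in D_g$ is obtainable from the Lemma 3.2 generators. The uncolored cases $g=ohs$ already say how to write any Temperley--Lieb diagram as a composition of the basic caps together with the hyperoctahedral or symmetric decorations. Starting from such an expression for the underlying uncolored diagram of $d$, I then color each atomic piece according to the labels forced on its endpoints by $d$. For $k$, every cap must be $\Cap_{ab}$ or $\Cap_{ba}$ and every vertical bar must be one of $|{\:}^\cup_\cap\,|^{ab}_{ab}$ or $|{\:}^\cup_\cap\,|^{ba}_{ba}$, matching items (5)--(6) of Lemma 3.2. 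For $p$, the extra generators ${\;}^\cup_\cap{}^a_a$ and ${\;}^\cup_\cap{}^b_b$ precisely absorb single unmatched strings, which explains why half-colorability is the correct condition. For $s'$, the uncolored parity generator ${\;}^\cup_\cap$, combined with the $s$-generators, produces all diagrams of even level difference.

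The step I expect to be the main obstacle is the bookkeeping of colorings under composition: when two colored generators are composed along matched endpoints, the resulting coloring is forced by the outer boundary, and one must check that the various colorings required along internal strings are mutually consistent for every diagram in $D_g$. Concretely, this reduces to establishing that for each colored or half-colored $d$ one can choose a planar decomposition into elementary caps, cups, and through-strings such that each piece receives a coloring from the allowed list. Once this planar-decomposition lemma is in hand, proved by induction on the number of strings by peeling off an innermost cap, the three new cases fall out in parallel with the established $g=ohs,u$ arguments.
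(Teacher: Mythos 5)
Your overall architecture coincides with the paper's: the paper's proof of this theorem is exactly ``Lemma 3.2 plus composing diagrams'', with $g=ohsu$ deferred to \cite{bbc,bc} and $g=s'kp$ declared similar, and your two inclusions (closure of ${\rm span}(D_g)$ under the categorical operations, and generation of every $d\in D_g$ from the Lemma 3.2 generators) are the right way to make that precise. Your closure direction is essentially correct, with one point you should make explicit for $p$: composition creates closed loops, which are removed, and one needs that every such loop carries an \emph{even} number of unmatched segments (each unmatched segment flips the color $x\leftrightarrow y$, and traversing a loop the color must return to itself), so that the parity of unmatches, hence half-colorability, survives composition after loop removal.

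The genuine gap is in the generation direction, in the mechanism ``planar decomposition by peeling off an innermost cap''. In $CA_k(n)$ and $CA_p(n)$ horizontal cuts are only allowed along rows colored by words in $a,b$, i.e.\ by blocks $xy$ or $yx$; peeling a single innermost cap cuts along a row containing a block $xx$ or $yy$, which is not an object of the category. The same problem already occurs for $s'$: an elementary single-cap insertion lies in a space $Hom(u^m,u^{m+1})$ with $k-l$ odd, hence outside ${\rm span}(D_{s'})$. And the failure is not merely notational. Consider, in $D_k(\emptyset,abab)$ (colors $xyyxxyyx$), the colorable diagram $d$ given by the pairing $\{1,8\},\{2,3\},\{4,5\},\{6,7\}$. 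Each of its innermost caps joins the two \emph{inner} points of adjacent letters, but the corresponding outer points are never joined to each other, so no translate $1^{\otimes r}\otimes\Cap_{ab}\otimes 1^{\otimes s}$ or $1^{\otimes r}\otimes\Cap_{ba}\otimes 1^{\otimes s}$ splits off; moreover the natural letter-level cut $d=X\circ\Cap_{ab}$ forces a piece $X$ with two unmatched through-strings, so $X$ lies in $D_p$ but not in $D_k$ and is unavailable when treating the $k$ case. In fact $d$ is a double Frobenius rotation of the bar generator $|{\:}^\cup_\cap\,|^{ab}_{ab}$, and it lies in $CA_k(n)$ precisely because the duality maps $\Cap_{ab},\Cap_{ba}$ belong to the category and rotations are implemented by composing with them. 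So your induction must be organized in letter-compatible, parity-preserving moves (double caps, bars, and the rank-one pieces for $p$ and $s'$), and it must permit Frobenius rotations before peeling; with that modification the plan goes through, but as stated the induction step fails on this example.
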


\begin{proof}
This is clear from the above lemma, by composing diagrams. The case $g=ohsu$ is discussed in \cite{bbc,bc}, and the case $g=s'kp$ is similar.
\end{proof}

\begin{theorem}
We have the following isomorphisms:
\begin{enumerate}
\item $A_u(n)=\tilde{A}_o(n)$.
\item $A_h(n)=\tilde{A}_k(n)$.
\item $A_p(n)=\tilde{A}_{s^*}(n)$.
\end{enumerate}
\end{theorem}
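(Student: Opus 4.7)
The plan is to apply Tannakian duality in the form of Woronowicz's reconstruction, reducing each isomorphism to an equality of tensor categories. Since a finitely generated Hopf algebra $(A,u)$ in the sense of Definition 1.1 is determined by the system $CA=\{Hom(u^\alpha,u^\beta)\}$, the three isomorphisms (1), (2), (3) amount to verifying, respectively, $CA_u(n)=C\tilde A_o(n)$, $CA_h(n)=C\tilde A_k(n)$, and $CA_p(n)=C\tilde A_{s^*}(n)$. By Theorem 3.3 the left-hand sides are already identified as spans of the colored diagrams $D_u,D_h,D_p$, so it remains to compute the right-hand sides from the complexification procedure and to match them with the corresponding colored pictures.

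The first step is a general formula for $C\tilde A$ in terms of $CA$. Working inside $C(\mathbb T)*A$ with $\tilde u^a=zu$ and $\tilde u^b=\bar z\bar u$, the entries of $\tilde u^\alpha$ for $\alpha\in F$ are reduced alternating words with a fixed pattern of $z$-exponents dictated by the letters of $\alpha$. Expanding the intertwining relation $T\tilde u^\alpha=\tilde u^\beta T$ in the free product and invoking the freeness of $z$ from $u$ with respect to the Haar state, the equation splits into an intertwining condition $T\in Hom(u^\alpha,u^\beta)$ in the category of $A$, together with a compatibility condition on the $z$-exponent patterns induced by $\alpha$ and $\beta$. A diagrammatic translation identifies this compatibility exactly with $(a,b)$-colorability under $a\to xy$, $b\to yx$ of Theorem 3.3.

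Plugging this formula into the three cases yields the result: for (1), doubled diagrams of $D_o$ restricted by $(a,b)$-colorability are by definition $D_u$; for (2) and (3), the colored diagrams of $D_k$ and $D_{s^*}$ restricted similarly yield $D_h$ and $D_p$. The main obstacle is the first step, namely making the freeness-to-colorability translation precise. The underlying mechanism is that a string of a diagram joining two labels of opposite $a/b$-type carries a canceling $z\bar z=1$, while a string joining two labels of the same $a/b$-type produces an unbalanced $z^{\pm 2}$ which forces the intertwiner to vanish on the relevant subspace. Once this correspondence is matched with the doubled-with-coloring description of Theorem 3.3, the three case verifications reduce to routine combinatorial inspection.
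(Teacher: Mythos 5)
Your skeleton (reduce to tensor categories, compute $C\tilde{A}$, match against Theorem 3.3) parallels the paper's, but the engine you supply for the middle step --- the ``general formula'' that $Hom(\tilde{u}^\alpha,\tilde{u}^\beta)$ consists of the elements of $Hom(u^{|\alpha|},u^{|\beta|})$ whose diagrams are colorable via $a\to xy$, $b\to yx$ --- is not only unproved but false, and it fails precisely on part (3) of the statement. Your vanishing mechanism (``a string joining two labels of the same type produces an unbalanced $z^{\pm 2}$, which forces the intertwiner to vanish'') tacitly assumes that the relevant sums of entries of $u$ are centered with respect to the Haar state; this is the infinite-level situation of Section 4, and it is violated by $A_{s^*}(n)$. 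Concretely, for $u$ magic or magic', all row and column sums of $\tilde{u}=zu$ are equal (to $z$, resp.\ to $zc$), so the all-ones matrix ${\;}^\cup_\cap=\sum_{ij}e_{ij}$ lies in $End(zu)$; but as a diagram placed on the word $a$ it has two color mismatches, hence is not colorable. This is exactly the magik generator ${{\;}^\cup_\cap}^a_a$ of $CA_p(n)$ from Lemma 3.2 (7), and it is why Theorem 3.3 (5) describes $D_p$ by the strictly weaker condition ``half-colorable''. Your formula would instead give $C\tilde{A}_{s^*}(n)={\rm span}(D_k)$, i.e.\ $\tilde{A}_{s^*}(n)=A_k(n)$, which is wrong: $A_p(n)\neq A_k(n)$, already classically $P_n\neq K_n$. (A related slip: as printed, (2) must be read as $A_k(n)=\tilde{A}_h(n)$, since $(A_h,u)$ is orthogonal while $(\tilde{A}_k,\tilde{u})$ is not; your matching for (2), ``restrict $D_k$ by colorability to get $D_h$'', returns $D_k$ itself, not $D_h$.)

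Second, even in the cases where colorability is the correct invariant (part (1), and part (2) in the corrected direction), your argument only shows that individual non-colorable diagrams fail to intertwine. To conclude that $Hom(\tilde{u}^\alpha,\tilde{u}^\beta)$ is \emph{spanned} by colorable diagrams you must also rule out that some linear combination of non-colorable diagrams intertwines --- diagrams are not linearly independent for small $n$ --- and you must control the reduced-word combinatorics in $C(\mathbb{T})*A$: entries of $\tilde{u}^\alpha$ are not reduced words, since adjacent $u$-letters merge into single letters with nonzero expectation, so the intertwining equation does not split pattern-by-pattern as directly as you suggest. This completeness statement is essentially the main theorem of \cite{b1}, in the generalized form of Theorem 5.1 of \cite{b2}, and it is exactly what the paper invokes rather than reproves: the left-to-right arrows come for free from the universal properties of $A_u,A_k,A_p$, Theorem 3.3 identifies the spaces $End(u\otimes\bar{u}\otimes u\otimes\cdots)$ on the two sides, and Theorem 5.1 of \cite{b2} (the tensor category of $\tilde{A}$ is generated by that of $A$, embedded via alternating words) then produces the arrows from right to left. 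A self-contained proof along your lines would require proving that generation theorem and replacing ``colorable'' by the correct, case-dependent invariant; neither is routine combinatorial inspection.
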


\begin{proof}
It follows from definitions that we have arrows from left to right. Now since by Theorem 3.3 the spaces $End(u\otimes\bar{u}\otimes u\otimes\ldots)$ are the same at right and at left, Theorem 5.1 in \cite{b2} applies, and gives the arrows from right to left.
\end{proof}

Observe that the assertion (1), known since \cite{b1}, is nothing but the isomorphism $U_n^+=O_n^{+c}$ mentioned in the beginning of the first section.

\section{Freeness, level, doubling}

We use the notion of free Hopf algebra, introduced in \cite{bbc}. Recall that a morphism $(A,u)\to (B,v)$ induces inclusions $Hom(u^\alpha,u^\beta)\subset Hom(v^\alpha,v^\beta)$.

\begin{definition}
A finitely generated Hopf algebra $(A,u)$ is called free if:
\begin{enumerate}
\item The canonical map $A_u(n)\to A_s(n)$ factorizes through $A$.
\item The spaces $Hom(u^\alpha,u^\beta)\subset{\rm span}(D_s(\alpha,\beta))$ are spanned by diagrams. 
\end{enumerate}
\end{definition}

It follows from Theorem 3.3 that the algebras $A_{ohs^*ukp}$ are free.

In the orthogonal case $u=\bar{u}$ we say that $A$ is free orthogonal, and in the general case, we also say that $A$ is free unitary.
 
\begin{theorem}
If $A$ is free orthogonal then $\tilde{A}$ is free unitary.
\end{theorem}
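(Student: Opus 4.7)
The plan is to verify the two clauses of Definition 4.1 for $\tilde{A}$, using the free orthogonal hypothesis on $A$ together with the identifications in Theorem 3.4.

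For clause (1), I will use functoriality. The hypothesis on $A$ gives the tower $A_o(n)\to A\to A_s(n)$. Applying the $\sim$ operation (Proposition 1.3(2)) and using Theorem 3.4(1,3) to identify $\tilde{A}_o(n)=A_u(n)$ and $\tilde{A}_s(n)=A_p(n)$, I obtain $A_u(n)\to\tilde{A}\to A_p(n)$. Composing with the canonical projection $A_p(n)\to A_s(n)$ (corresponding to the inclusion $S_n\subset P_n=\mathbb{T}\times S_n$, and at the level of generators sending the magik generator $zp_{ij}$ to the magic generator $p_{ij}$) yields the required factorization of $A_u(n)\to A_s(n)$ through $\tilde{A}$.

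For clause (2), I must show that the spaces $Hom(\tilde{u}^\alpha,\tilde{u}^\beta)\subset\mathrm{span}(D_s(\alpha,\beta))$ are spanned by diagrams. My strategy is a $\mathbb{Z}$-graded reduction to the Hom spaces of $u$ in $A$. Expanding $\tilde{u}_{ij}=zu_{ij}$ and $\bar{\tilde{u}}_{ij}=u_{ij}\bar{z}$ (valid since $u=\bar{u}$), the matrix entries of $\tilde{u}^\alpha$ become reduced words in $C(\mathbb{T})*A$ whose pattern of $z^{\pm 1}$ factors is dictated by $\alpha$. Reading $T\tilde{u}^\alpha=\tilde{u}^\beta T$ in each homogeneous component of the circle action $z\mapsto\lambda z$ converts it into an intertwiner equation $Tu^{|\alpha|}=u^{|\beta|}T$ in $A$ together with a constraint that the $z$-patterns of $\alpha$ and $\beta$ match across $T$. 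Since $A$ is free orthogonal, $Hom(u^{|\alpha|},u^{|\beta|})_A$ is spanned by a subset of diagrams in $D_s(|\alpha|,|\beta|)$, and the matching constraint selects exactly those diagrams in $D_s(\alpha,\beta)$ compatible with the double-coloring rule $a\to xy$, $b\to yx$, giving the desired diagrammatic spanning.

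The main obstacle will be clause (2), specifically identifying the $z$-pattern constraint with the diagrammatic coloring rule. Fortunately, this analysis is essentially the content of Theorem 5.1 of \cite{b2}, already invoked in the proof of Theorem 3.4 to handle the universal case $A_o(n)\to A_u(n)$. Since that argument depends only on the diagrammatic spanning of the relevant Hom spaces of $u$, it applies verbatim to an arbitrary free orthogonal $A$, yielding the result.
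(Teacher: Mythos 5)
Your proposal is correct and rests on the same pillar as the paper's own proof: both defer the essential content --- that the tensor category of $\tilde{A}$ is generated by (colored, alternating-word embedded) copies of the Hom spaces of $A$ --- to Theorem 5.1 of \cite{b2}, exactly as that theorem is invoked for Theorem 3.4. Your explicit functorial verification of clause (1) (via Proposition 1.3(2), Theorem 3.4, and the projection $A_p(n)\to A_s(n)$) is a correct elaboration of what the paper compresses into ``this gives the result,'' though be aware that your circle-action grading sketch for clause (2) is only heuristic: when $\alpha$ and $\beta$ have the same $z$-degree the grading alone yields nothing, and the genuine mechanism (reduced-word decomposition in the free product $C(\mathbb T)*A$ relative to the Haar state) is precisely what the cited theorem supplies, so your proof stands only because you delegate that step rather than rely on the sketch.
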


\begin{proof}
It is shown in \cite{b2} that the tensor category of $\tilde{A}$ is generated by the tensor category of $A$, embedded via alternating words, and this gives the result.
\end{proof}

\begin{definition}
The level of a free orthogonal Hopf algebra $(A,u)$ is the smallest number $l\in\{0,1,\ldots,\infty\}$ such that $1\in u^{\otimes 2l+1}$.
\end{definition}

As the level of examples, for $A_s(n)$ we have $l=0$, and for $A_{ohs'}(n)$ we have $l=\infty$. This follows indeed from Theorem 3.3.

\begin{theorem}
If $l<\infty$ then $\tilde{A}=C(\mathbb T)*A$.
\end{theorem}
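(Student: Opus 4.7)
The plan is to reduce the equality $\tilde{A} = C(\mathbb{T})*A$ to the single containment $z \in \tilde{A}$. Once $z$ is captured in $\tilde A$, the relation $u_{ij} = z^{*}\tilde{u}_{ij}$ shows every entry of $u$ also lies in $\tilde A$, so $\tilde A$ contains the two generating copies of $C(\mathbb T)$ and $A$ inside the free product, and hence equals it. (The reverse containment $\tilde A\subset C(\mathbb T)*A$ is built into Definition~1.3.)

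The core computation is a telescoping identity. Orthogonality gives $u_{ij}^{*} = u_{ij}$, so $\tilde u_{ij} = zu_{ij}$ and $\tilde u_{ij}^{*} = u_{ij}z^{*}$. Inside $C(\mathbb T)*A$, an odd alternating product
\[
\tilde u_{i_{1}j_{1}}\tilde u_{i_{2}j_{2}}^{*}\tilde u_{i_{3}j_{3}}\tilde u_{i_{4}j_{4}}^{*}\cdots\tilde u_{i_{2l+1}j_{2l+1}}
\]
then collapses, each interior $z^{*}z$ cancelling to $1$, leaving $z\cdot u_{i_{1}j_{1}}u_{i_{2}j_{2}}\cdots u_{i_{2l+1}j_{2l+1}}$. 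The odd parity is crucial: it keeps a single outer $z$ on the left, while the product of $u$'s on the right sits entirely in $A$.

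Now invoke $l < \infty$. Pick a nonzero intertwiner in $Hom(u^{\otimes(2l+1)},1)$, which exists dually from $1\in u^{\otimes(2l+1)}$ by unitarity, and represent it by a vector $\xi=(\xi_{I})$ satisfying
\[
\sum_{I} \xi_{I}\, u_{i_{1}j_{1}} u_{i_{2}j_{2}}\cdots u_{i_{2l+1}j_{2l+1}} = \xi_{J}\cdot 1_{A}
\]
for every multi-index $J$. Contracting the telescoping identity against $\xi$ in the first slot yields
\[
\sum_{I} \xi_{I}\, \tilde u_{i_{1}j_{1}}\tilde u_{i_{2}j_{2}}^{*}\cdots\tilde u_{i_{2l+1}j_{2l+1}} = z\cdot\xi_{J}.
\]
Since $\xi\neq 0$, some $J$ has $\xi_{J}\neq 0$, and dividing by this scalar exhibits $z$ in $\tilde A$, completing the reduction.

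The one subtlety I expect is orienting the invariant vector so that the summation runs over the \emph{first} index of each $u$ factor, not the second. In the orthogonal setting this is free because $u=\bar u$ identifies $Hom(1,u^{\otimes m})$ with $Hom(u^{\otimes m},1)$, but it is the one spot where a misplaced adjoint could derail the telescoping. The computation also shows transparently why finite level is indispensable: an even-order invariant would force an alternating product of even length, producing $zz^{*}=1$ at the ends and giving no information about $z$.
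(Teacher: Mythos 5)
Your proposal is correct and takes essentially the same route as the paper: the paper's one-line proof (``from $1\in\langle u\rangle$ we get $z\in\langle zu_{ij}\rangle$, hence $\langle zu_{ij}\rangle=\langle z,u_{ij}\rangle$'') is exactly your argument in compressed form, with the telescoping of $z^*z$ in odd alternating products and the contraction against a fixed vector of $u^{\otimes(2l+1)}$ left implicit. You have simply supplied the details, including the correct handling of the orientation of the intertwiner, which the orthogonality $u=\bar u$ makes harmless as you note.
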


\begin{proof}
Let $<r>$ be the algebra generated by the coefficients of $r$. From $1\in<u>$ we get $z\in<zu_{ij}>$, hence $<zu_{ij}>=<z,u_{ij}>$, and we are done.
\end{proof}

\begin{corollary}
$A_p(n)=C(\mathbb T)*A_s(n)$.
\end{corollary}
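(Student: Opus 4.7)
The plan is to assemble this immediately from Theorem 3.4(3) and Theorem 4.4, so there is no real obstacle—the work has been done upstream.

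First I would specialize Theorem 3.4(3) to the symbol $s^* = s$, which gives $A_p(n) = \tilde{A}_s(n)$. This reduces the claim to showing that the free complexification of $A_s(n)$ collapses to a free product with $C(\mathbb T)$.

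Next I would invoke Theorem 4.4, applied to $A = A_s(n)$. For this I need to check the hypothesis that $A_s(n)$ is free orthogonal of finite level. Being free orthogonal is immediate from the definition of the magic condition (a magic matrix is in particular orthogonal), and being free is explicitly recorded after Definition 4.1. For the level, the row-sum relations in the magic condition give $\sum_j u_{ij} = 1$ for every $i$, i.e.\ the all-ones vector is fixed by $u$, so $1 \in u$. Hence $A_s(n)$ has level $l = 0 < \infty$, which is exactly what the remark after Definition 4.3 asserts.

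Applying Theorem 4.4 then yields $\tilde{A}_s(n) = C(\mathbb T) * A_s(n)$, and combining with the first step gives $A_p(n) = C(\mathbb T) * A_s(n)$. The whole argument is a one-line composition; no new computation is required.
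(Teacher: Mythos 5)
Your proposal is correct and follows exactly the paper's own route: the paper's proof likewise rests on $1\in u$ giving level $l=0$ for $A_s(n)$, so that Theorem 4.4 applies, with the identification $A_p(n)=\tilde{A}_s(n)$ from Theorem 3.4(3) used implicitly. You have merely made explicit the hypotheses (freeness, orthogonality) that the paper leaves tacit, which is fine.
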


\begin{proof}
For $A_s(n)$ we have $1\in u$, hence $l=0$, and Theorem 4.4 applies.
\end{proof} 

We can define a ``doubling'' operation $A\to A_2$ for free orthogonal algebras, by using Tannakian duality, in the following way: the spaces $Hom(u^k,u^l)$ with $k-l$ even remain by definition the same, and those with $k-l$ odd become by definition empty. The interest in this operation is that $A_2$ has infinite level.

At the level of examples, the doublings are $A_{ohs^*}(n)\to A_{ohs'}(n)$.

\begin{proposition}
For a free orthogonal algebra $A$, the following are equivalent:
\begin{enumerate}
\item $A$ has infinite level.
\item The canonical map $A_2\to A$ is an isomorphism.
\item The quotient map $A\to A_s(n)$ factorizes through $A_{s'}(n)$.
\end{enumerate}
\end{proposition}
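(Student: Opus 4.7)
My plan is to translate each of the three conditions into a statement about the Hom spaces of the tensor category of $A$, and then to verify their equivalence by invoking Tannakian duality for free orthogonal Hopf algebras.

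The crucial preliminary observation is Frobenius reciprocity: since $A$ is orthogonal, $u=\bar u$, and one has a natural isomorphism $Hom(u^a,u^b)\simeq Hom(1,u^{a+b})$ for all $a,b\in\mathbb N$. Consequently, condition (1), which asks that $Hom(1,u^k)=0$ for every odd $k$, is equivalent to the assertion
$$(*)\qquad Hom(u^a,u^b)=0\quad\text{whenever } a-b\text{ is odd.}$$
This single reformulation is what drives both of the remaining equivalences, and I would state it first.

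For (1)$\Leftrightarrow$(2), I would recall that by construction the doubling $A_2$ has the same even-parity Hom spaces as $A$ and empty odd-parity ones. This inclusion of tensor categories produces, via Tannakian reconstruction for free orthogonal algebras, the canonical surjection $A_2\to A$. It is an isomorphism iff the two tensor categories coincide, which is exactly the condition $(*)$, hence equivalent to (1).

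For (1)$\Leftrightarrow$(3), the factorization of the canonical $A\to A_s(n)$ through $A_{s'}(n)$ exists iff a morphism $A\to A_{s'}(n)$ exists (the composite with $A_{s'}(n)\to A_s(n)$ is then automatically the original map, by the uniqueness of morphisms between finitely generated Hopf algebras). By Tannakian duality, a morphism $A\to A_{s'}(n)$ exists iff $Hom_A(u^k,u^l)\subset Hom_{A_{s'}(n)}(u^k,u^l)$ for every $k,l$; but by Theorem 3.3(2), $Hom_{A_{s'}(n)}(u^k,u^l)=0$ when $k-l$ is odd, so this containment is precisely $(*)$.

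I do not expect serious obstacles: once the Frobenius reciprocity shortcut $(*)$ is in place, both equivalences reduce to essentially the same Tannakian bookkeeping. The only subtlety worth being careful about is the direction of the ``canonical map $A_2\to A$'' (it arises as a surjection because $A_2$ has a smaller tensor category than $A$), and the fact that one really does need freeness of $A$ in order to read off the Hopf algebra from its Hom spaces.
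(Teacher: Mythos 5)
Your proposal is correct and follows essentially the same route as the paper, whose own proof is just the one-line remark that (1)$\Leftrightarrow$(2) is clear from the definitions and that the equivalence with (3) follows from Tannakian duality. What you have done is fill in exactly those steps --- the Frobenius-reciprocity reformulation $(*)$ of the level condition, the identification of the doubling's category, and the category-inclusion criterion for the existence of a morphism into $A_{s'}(n)$ --- so your write-up is a legitimate expansion of the intended argument rather than a different one.
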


\begin{proof}
The equivalence between (1) and (2) is clear from definitions, and the equivalence with (3) follows from Tannakian duality.
\end{proof}

\section{The main result}

We know from Theorem 3.4 that the two rows of the diagram formed by the algebras $A_{ohs^*ukp}$ are related by the operation $A\to\tilde{A}$. Moreover, the results in the previous section suggest that the correct choice in the lower row is $s^*=s'$. The following general result shows that this is indeed the case.

\begin{theorem}
The operation $A\to\tilde{A}$ induces a one-to-one correspondence between the following objects:
\begin{enumerate}
\item Free orthogonal algebras of infinite level.
\item Free unitary algebras satisfying $A=\tilde{A}$.
\end{enumerate}
\end{theorem}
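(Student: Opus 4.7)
The plan is to use Tannakian duality to construct an explicit inverse to $A \mapsto \tilde{A}$, and then verify the two round-trips.

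First, the forward direction is well-defined on the stated domain: Theorem 4.2 gives $\tilde{A}$ free unitary, and Proposition 1.3(3) yields $\tilde{\tilde{A}} = \tilde{A}$. The infinite-level restriction is needed not for well-definedness but to achieve injectivity, since without it distinct orthogonal algebras can share a complexification; e.g.\ by Corollary 4.5 and Theorem 3.4(3) both $A_s(n)$ and $A_{s'}(n)$ complexify to $A_p(n)$, and only the infinite-level $A_{s'}(n)$ is recoverable.

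For the inverse, given a free unitary $B$ with $\tilde{B} = B$, I would define $A = \Phi(B)$ by Tannakian duality via the Hom-spaces
$$Hom_A(u^k, u^l) := Hom_B(v^{\alpha_k}, v^{\alpha_l}),$$
where $\alpha_k \in F = \langle a,b \rangle$ is the alternating word $abab\ldots$ of length $k$ starting with $a$. The routine verifications are that these spaces form a $C^*$-tensor category with duals (using the cap/cup morphisms of $B$ to bridge $v^{\alpha_k} \otimes v^{\alpha_{k'}}$ with $v^{\alpha_{k+k'}}$), and that the resulting Hopf algebra is free orthogonal --- the latter because freeness of $B$ transfers to these subspaces through the alternating-word coloring scheme, which restricted to $\alpha_k,\alpha_l$ matches the standard diagram basis over $2k,2l$ points.

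The key nontrivial step is showing that $A$ has infinite level. For this I would use the hypothesis $\tilde{B} = B$ to lift a gauge action of $\mathbb{T}$ from $C(\mathbb{T}) * B$ to $B$ via the isomorphism $B \cong \tilde{B} \subset C(\mathbb{T}) * B$; under this action each $v_{ij}$ has weight $+1$, so $v^\beta$ has weight $|\beta|_a - |\beta|_b$, and in particular $v^{\alpha_{2l+1}}$ carries odd weight. Any intertwiner $1 \to v^{\alpha_{2l+1}}$ would have to be gauge-invariant yet land in an odd-weight corepresentation, so $Hom_B(1, v^{\alpha_{2l+1}}) = Hom_A(1, u^{2l+1}) = 0$. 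The round-trips close by Tannakian comparison: $\Phi(\tilde{A}) = A$ follows from the content of Theorem 4.2 identifying alternating-word Hom-data of $\tilde{A}$ with Hom-data of $A$; while $\widetilde{\Phi(B)} = B$ follows because the category of $\widetilde{\Phi(B)}$ is tensor-generated by the alternating-word image of $\Phi(B)$'s category, which by construction is the alternating-word part of $B$'s, and this tensor-generates all of $B$'s category precisely by the hypothesis $B = \tilde{B}$.

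The hardest ingredient will be this last tensor-generation statement --- that $B = \tilde{B}$ forces $B$'s category to be tensor-generated by its own alternating-word part --- since it is the step where the self-complexification hypothesis must be used in its strongest form. Everything else is routine Tannakian bookkeeping or follows from results already available (Theorem 4.2, Proposition 1.3, and the diagrammatic description of $CA_g$ in Theorem 3.3).
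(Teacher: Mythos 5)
Your overall architecture matches the paper's: reconstruct the orthogonal algebra from the alternating-word Hom-data of $B$ via Tannakian duality, get well-definedness from Theorem 4.2 together with Proposition 1.3(3), and close the second round trip by the tensor-generation result (which is exactly Theorem 5.1 of \cite{b2}, the same result the paper invokes -- so the step you single out as ``hardest'' is in fact available off the shelf). Your gauge-action argument for infinite level is correct, and even slicker than what the paper needs. But there is a genuine gap at precisely the step you dismiss as routine: the tensor closure of the proposed Hom-data. You cannot ``bridge $v^{\alpha_k}\otimes v^{\alpha_{k'}}$ with $v^{\alpha_{k+k'}}$'' using cap/cup morphisms. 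Any morphism built from $\Cap_{ab}\in Hom(1,v\otimes\bar{v})$ and $\Cap_{ba}\in Hom(1,\bar{v}\otimes v)$ only inserts or removes neutral pairs, hence preserves the weight $|\beta|_a-|\beta|_b$ of a word; for $k,k'$ both odd, the words $\alpha_k\alpha_{k'}$ and $\alpha_{k+k'}$ have weights $2$ and $0$, so no bridging morphism can exist. Indeed, your own gauge-action argument proves this impossibility, since a bridge would intertwine corepresentations of different weights. Concretely, take $B=A_u(n)$, which does satisfy $B=\tilde{B}$ by Theorem 3.4(1) and Proposition 1.3(3): then $Hom(v\otimes v,\,v\otimes\bar{v})=0$, as one checks from Theorem 3.3(7) (no double diagram between the colorings $xyxy$ and $xyyx$ is colorable) or from the fusion rules of \cite{b1}. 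So for $f,g\in Hom_B(v^{\alpha_1},v^{\alpha_1})$, the element $f\otimes g$ lives in $Hom_B(v^{aa},v^{aa})$, which is not one of your spaces, and there is no canonical way inside $B$'s category to transport it into $Hom_B(v^{ab},v^{ab})$.

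This is exactly where the paper's proof does its real work, and where the hypothesis of freeness (not just the fixed-point property $B=\tilde{B}$) is essential. Because $B$ is free, each $Hom(v^\alpha,v^\beta)$ is the span of a diagram set $D(\alpha,\beta)$, and the canonical \emph{antilinear} conjugation isomorphism $Hom(v^{\gamma_K},v^{\gamma_L})\simeq Hom(v^{\delta_K},v^{\delta_L})$ (where $\gamma_K=abab\ldots$ and $\delta_K=baba\ldots$) becomes, at the combinatorial level, an honest equality of diagram sets $D(\gamma_K,\gamma_L)=D(\delta_K,\delta_L)$. This lets one choose, factor by factor, the $\gamma$- or $\delta$-representative of each Hom-space so that the concatenated word is again alternating: $\gamma_k\gamma_p=\gamma_{k+p}$ when $k$ is even, $\delta_k\gamma_p=\delta_{k+p}$ and $\gamma_k\delta_p=\gamma_{k+p}$ when $k$ is odd, according to the parity of $p$. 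That three-case computation is the substance of the surjectivity proof; without it (or some substitute for the conjugation trick) your functor $\Phi$ is not defined, and nothing downstream of it can be salvaged. I recommend you redirect the effort you reserved for the final tensor-generation step to this closure argument, keeping your gauge-action observation as a clean way to handle the odd spaces and the infinite-level property.
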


\begin{proof}
We use the notations $\gamma_k=abab\ldots$ and $\delta_k=baba\ldots$ ($k$ terms each).

We know from Theorem 4.2 that the operation $A\to\tilde{A}$ is well-defined, between the algebras in the statement. Moreover, since by Tannakian duality an orthogonal algebra of infinite level is determined by the spaces $Hom(u^k,u^l)$ with $k-l$ even, we get that $A\to\tilde{A}$ is injective, because these spaces are:
$$Hom(u^k,u^l)
=Hom((zu)^{\gamma_k},(zu)^{\gamma_l})$$

It remains to prove surjectivity. So, let $A$ be free unitary satisfying $A=\tilde{A}$. We have $CA={\rm span}(D)$ for certain sets of diagrams $D(\alpha,\beta)\subset D_s(\alpha,\beta)$, so we can define a collection of sets $D_2(k,l)\subset D_s(k,l)$ in the following way:
\begin{enumerate}
\item For $k-l$ even we let $D_2(k,l)=D(\gamma_k,\gamma_l)$.
\item For $k-l$ odd we let $D_2(k,l)=\emptyset$.
\end{enumerate}

It follows from definitions that $C_2={\rm span}(D_2)$ is a category, with duality and involution. We claim that $C_2$ is stable under $\otimes$. Indeed, for $k,l$ even we have:
\begin{eqnarray*}
D_2(k,l)\otimes D_2(p,q)
&=&D(\gamma_k,\gamma_l)\otimes D(\gamma_p,\gamma_q)\\
&\subset&D(\gamma_k\gamma_p,\gamma_l\gamma_q)\\
&=&D(\gamma_{k+p},\gamma_{l+q})\\
&=&D_2(k+p,l+q)
\end{eqnarray*}

For $k,l$ odd and $p,q$ even, we can use the canonical antilinear isomorphisms $Hom(u^{\gamma_K},u^{\gamma_L})\simeq Hom(u^{\delta_K},u^{\delta_L})$, with $K,L$ odd. At the level of diagrams we get equalities $D(\gamma_k,\gamma_L)=D(\delta_K,\delta_L)$, that can be used in the following way:
\begin{eqnarray*}
D_2(k,l)\otimes D_2(p,q)
&=&D(\delta_k,\delta_l)\otimes D(\gamma_p,\gamma_q)\\
&\subset&D(\delta_k\gamma_p,\delta_l\gamma_q)\\
&=&D(\delta_{k+p},\delta_{l+q})\\
&=&D_2(k+p,l+q)
\end{eqnarray*}

Finally, for $k,l$ odd and $p,q$ odd, we can proceed as follows:
\begin{eqnarray*}
D_2(k,l)\otimes D_2(p,q)
&=&D(\gamma_k,\gamma_l)\otimes D(\delta_p,\delta_q)\\
&\subset&D(\gamma_k\delta_p,\gamma_l\delta_q)\\
&=&D(\gamma_{k+p},\gamma_{l+q})\\
&=&D_2(k+p,l+q)
\end{eqnarray*}

Thus we have a Tannakian category, and by Woronowicz's results in \cite{wo2} we get an algebra $A_2$. This algebra is free orthogonal, of infinite level. Moreover, the spaces $End(u\otimes\bar{u}\otimes u\otimes\ldots)$ being the same for $A$ and $A_2$, Theorem 5.1 in \cite{b2} applies, and gives $\tilde{A_2}=\tilde{A}$. Now since we have $A=\tilde{A}$, we are done.
\end{proof}

\end{document}